\newcommand{\B}[1]{{\mathbf #1}}
\newcommand{\C}[1]{{\mathcal #1}}
\newcommand\mydots{\hbox to 0.8em{.\hss.\hss.}\thinspace}
\newcommand{\overbar}[1]{\mkern 1.5mu\overline{\mkern-1.5mu#1\mkern-1.5mu}\mkern 1.5mu}
\newcommand*{\defeq}{\mathrel{\vcenter{\baselineskip0.5ex \lineskiplimit0pt
	                     \hbox{\scriptsize.}\hbox{\scriptsize.}}}%
			                          =}
\newcommand{\n}{|\hspace{-1px}|}
\newcommand{\rH}{\overbar{\OP{H}}}
\newcommand{\rEH}{\overbar{\OP{EH}}}
\newcommand{\Gb}{\Gamma_{\hspace{-2px}b}}
\newcommand{\EGb}{\OP{E}\hspace{-2px}\Gamma_{\hspace{-2px}b}}
\newcommand{\rGb}{\overbar{\Gamma}_{\hspace{-2px}b}}
\newcommand{\rEGb}{\overbar{\OP{E}\hspace{-2px}\Gamma}_{\hspace{-2px}b}}
\newtheorem{theorem}{Theorem}[section]
\newtheorem{theorem*}{Theorem}
\newtheorem{lemma}{Lemma}[section]
\newtheorem{proposition}[lemma]{Proposition}
\newtheorem*{question*}{Question}
\theoremstyle{definition}
\newtheorem{remark}[lemma]{Remark}
\newtheorem{question}[lemma]{Question}
\newtheorem*{remark*}{Remark}
\newtheorem*{remarks*}{Remarks}
\newtheorem*{corollary*}{Corollary}
\numberwithin{figure}{section}
\numberwithin{table}{section}
\numberwithin{equation}{section}
\def\B{\mathbf}
\newcommand{\OP}{\operatorname}
\newcommand{\wt}{\widetilde}
\begin{document}

\title[Bounded cohomology of transformation groups]
{Bounded cohomology of transformation groups}
\author{Michael Brandenbursky and Micha\l\ Marcinkowski}
\address{Ben Gurion University of the Negev, Israel}
\email{brandens@bgu.ac.il}
\address{Institute of Mathematics, Polish Academy of Sciences, Wroc\l aw, Poland}
\email{marcinkow@math.uni.wroc.pl} 
\keywords{Bounded cohomology, homeomorphism groups, diffeomorphism groups, symplectomorphism groups}
\subjclass[2010]{20,51}

\begin{abstract}  
Let $M$ be a~complete connected Riemannian manifold of finite volume. 
We present a~new method of constructing classes 
in bounded cohomology of transformation groups such as
$\OP{Homeo}_0(M,\mu)$, $\OP{Diff}_0(M,\OP{vol})$ and 
$\OP{Symp}_0(M,\omega)$.
As an application we show that for many manifolds (in particular for hyperbolic surfaces)
the $3^{rd}$ bounded cohomology of these groups is infinite dimensional. 
\end{abstract} 

\maketitle

Let $M$ be a~complete connected Riemannian manifold with empty boundary and of finite volume, and
let $\mu$ be the induced (by the Riemannian metric) measure on $M$.
Denote by $\OP{Homeo}(M,\mu)$ the group of all $\mu$-preserving compactly supported homeomorphisms of $M$, and
by $\OP{Homeo}_0(M,\mu)$ the connected component of the identity of $\OP{Homeo}(M,\mu)$. 
For a group $G$ we denote by $\OP{H}_b^\bullet(G)$ the bounded cohomology of $G$.
In this paper we define and study the homomorphism:
$$
\Gb \colon \OP{H}_b^\bullet(\pi_1(M)) \to \OP{H}_b^\bullet(\OP{Homeo}_0(M,\mu)).
$$

Note that $\OP{H}_b^\bullet(\OP{Homeo}_0(M,\mu))$ is the bounded cohomology of the \textit{discrete} group $\OP{Homeo}_0(M,\mu)$,
see Section~\ref{s:prelimin} for definitions. 


The map $\Gb$ can be seen as a~generalization of the construction 
given by Gambaudo-Ghys \cite{MR2104597} and Polterovich \cite{MR2276956}, which in particular
was extensively used in the study of several conjugacy invariant norms on transformation groups 
\cite{MR3391653, MR3044593, BKS, eq, 1405.7931, MR2104597}.
This construction was restricted only to non-trivial homogeneous quasimorphisms, 
that is, to a~certain subspace of the second
bounded cohomology. In this paper we deal with bounded cohomology in all dimensions. 

The map $\Gb$ can be also defined for the exact and the reduced bounded cohomology. 
Moreover, another advantage of $\Gamma_b$ is that it has a~counterpart that works for the \textit{ordinary cohomology} in all dimensions.
However, the case of the ordinary cohomology seems to be less natural for our construction and harder to work with.

Let us note, that every homomorphism $h \colon \OP{Homeo}_0(M,\mu) \to G$ has abelian image. 
Since bounded cohomology of abelian groups is trivial, the induced map $h^* \colon \OP{H}^\bullet_b(G) \to \OP{H}^\bullet_b(\OP{Homeo}_0(M,\mu))$ is trivial.  
However, $\OP{Homeo}_0(M,\mu)$ admits an interesting measurable cocycle (homomorphisms are particular examples of measurable cocycles), 
which we construct in Section \ref{s:def} and use to define~$\Gb$. 

By definition $\OP{Homeo}_0(M,\mu)$ preserves the measure~$\mu$. 
This property is essential, since it is known that $\Gb$ can not factor through 
larger groups such as $\OP{Homeo}_0(M)$, see Remark \ref{r:extend}.

We use the map $\Gb$ in order to show non-triviality of the third bounded cohomology 
of many transformation groups $\C T_M<\OP{Homeo}_0(M,\mu)$. 
Below we describe three main families of examples we focus on:

\begin{itemize}
	\item $M$ is a complete Riemannian manifold of finite volume, $\mu$ is the measure 
	induced by the Riemannian metric and $\C T_M=\OP{Homeo}_0(M,\mu)$. 
	
	\item $M$ is a complete Riemannian manifold of finite volume, $\OP{vol}$ 
	is the volume form induced by the Riemannian metric 
	and $\C T_M = \OP{Diff}_0(M,\OP{vol})$, i.e., it is 
	the identity component of the group of compactly supported volume-preserving diffeomorphisms of $M$. 
	
	\item $(M,\omega)$ is a~symplectic manifold admitting  
	a~complete Riemannian metric, and $\C T_M=\OP{Symp}_0(M,\omega)$, i.e., it is
	the identity component of the group of 
	compactly supported symplectomorphisms of $M$.
	We think of $\C T_M$ as a~subgroup of $\OP{Homeo}_0(M,\mu)$, 
	where $\mu$ is induced by the volume form $\omega^{\frac{1}{2}\OP{dim}M}$ and it is finite.  
\end{itemize}

Let $G$ be a~group and let $\rEH_b^\bullet(G)$ denote the reduced 
exact bounded cohomology of $G$ (see Section \ref{s:prelimin}).
Denote by $\OP{dim}\rEH_b^\bullet(G)$ the real linear dimension of $\rEH_b^\bullet(G)$ 
and by $\OP{F}_2$ the free group of rank $2$.

\begin{theorem}\label{t:surjects.on.F} Let $M$ be as above and
suppose that $\pi_1(M)$ surjects onto $\OP{F}_2$. Then for every $n$ there exists a monomorphism 
$$
\overbar{\OP{EH}}_{b}^n(\OP{F}_2) \hookrightarrow \overbar{\OP{EH}}_{b}^n(\C T_M),
$$
where $\C T_M $ is either $\OP{Homeo}_0(M,\mu)$, or $\OP{Diff}_0(M,\OP{vol})$, or $\OP{Symp}_0(M,\omega)$.
\end{theorem}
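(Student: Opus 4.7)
The plan is to realize the sought injection as the composition $\rEGb \circ \psi^*$, where $\psi\colon \pi_M \twoheadrightarrow \OP{F}_2$ is the surjection given by the hypothesis and $\rEGb\colon \rEH_b^n(\pi_M) \to \rEH_b^n(\C T_M)$ is the reduced exact bounded cohomology version of $\Gb$ constructed earlier in the paper (which, because of the role of $\pi_M=\pi_1(M)/Z(\pi_1(M))$ highlighted in the setup, is what one expects $\Gb$ to factor through).

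The first step is the easy half. Since $\OP{F}_2$ is free, $\psi$ admits a group-theoretic section $s\colon \OP{F}_2 \to \pi_M$ obtained by lifting a free basis. By functoriality of reduced exact bounded cohomology, $s^* \psi^* = \OP{id}$ on $\rEH_b^n(\OP{F}_2)$, so
$$
\psi^*\colon \rEH_b^n(\OP{F}_2) \hookrightarrow \rEH_b^n(\pi_M)
$$
is split injective.

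The heart of the argument is to prove that $\rEGb$ is injective when restricted to the image of $\psi^*$ (one does \emph{not} expect $\rEGb$ to be globally injective on $\rEH_b^n(\pi_M)$). Here the strategy is an explicit test-configuration argument: given a nonzero class $[\omega]\in \rEH_b^n(\OP{F}_2)$, detect non-vanishing of $\rEGb(\psi^*[\omega])$ on an $(n+1)$-tuple of specially constructed elements of $\C T_M$. Concretely, lift the free generators $a,b$ of $\OP{F}_2$ to $\tilde{a},\tilde{b}\in\pi_M$ and realize them by disjoint smoothly embedded loops $\alpha,\beta\subset M$; inside pairwise disjoint tubular neighborhoods construct compactly supported elements $f_a,f_b\in \C T_M$ (chosen in the appropriate regularity class for each of the three cases of $\C T_M$) whose Gambaudo-Ghys trace loops pass through $\alpha$ or $\beta$ on a set of positive $\mu$-measure. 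Evaluating $\rEGb(\psi^*[\omega])$ on a carefully chosen $(n+1)$-tuple of words in $\{f_a,f_b\}$ via the integration formula defining $\rEGb$ then reproduces, up to a positive multiplicative factor and a controllable error, the value of a cocycle representative of $[\omega]$ on a corresponding tuple of words in $\OP{F}_2$, which is nonzero by the choice of $[\omega]$.

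The main obstacle lies in the last step: one must simultaneously control the error terms coming from points whose trace loops do not realize the chosen words, and ensure that the detection persists at the level of reduced exact bounded cohomology rather than merely at the cocycle level (ruling out cancellation by a bounded primitive). The expected remedy is a limiting argument over families of test configurations with shrinking supports, exploiting the geometric flexibility of $\C T_M$ in all three cases to obtain many disjoint test maps of controlled Gambaudo-Ghys trace; combined with the combinatorial (Brooks-like) structure of representative cocycles on $\OP{F}_2$, this forces any putative bounded primitive of $\rEGb(\psi^*[\omega])$ to have unbounded norm, a contradiction.
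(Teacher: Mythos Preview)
Your overall architecture matches the paper's: the injection is indeed $\rEGb\circ p^*$ (your $\psi^*$), and non-vanishing is detected by pulling back along a homomorphism $\rho\colon \OP{F}_2\to \C T_M$ built from finger-push maps around loops representing the images of the free generators. But two points need correction.

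First, and this is fatal as written: the loops $\alpha,\beta$ must \emph{not} be disjoint, and their tubular neighborhoods must \emph{not} be disjoint. In the paper the loops are based at the common point $z$ and meet only there; the tubular neighborhoods $N(\alpha)$ and $N(\beta)$ overlap near $z$. The whole mechanism hinges on the region $A_\epsilon = A_\epsilon(\alpha)\cap A_\epsilon(\beta)$ of positive measure: for $x\in A_\epsilon$ the trace loop $\gamma(\rho_\epsilon(w),x)$ is conjugate to the \emph{full} word $w$. If the neighborhoods are disjoint, then every $x$ sees at most one of $\alpha,\beta$, so $\gamma(\rho_\epsilon(w),x)$ is always conjugate to a power of a single generator; after pulling back you land in $\rEH_b^n(\B Z)=0$ and detect nothing. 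The nonzero constant $\Lambda$ in the paper is precisely $\mu(N(\alpha)\cap N(\beta))$.

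Second, your plan for passing from cocycle-level detection to reduced cohomology (``force any putative bounded primitive to have unbounded norm'' via Brooks-type combinatorics) is unnecessary and would be hard to carry out uniformly over all of $\rEH_b^n(\OP{F}_2)$. The paper avoids this entirely by working at the level of classes in $\rEH_b^n(\OP{F}_2)$ from the start: one decomposes the integral defining $\rho_\epsilon^*\rEGb(c)$ over the regions $A_\epsilon$, $A_\epsilon^a$, $A_\epsilon^b$, $B_\epsilon$, and observes that (i) the $A_\epsilon$-piece represents $\mu(A_\epsilon)\,i^*[c]$ because conjugation acts trivially on bounded cohomology, (ii) the $A_\epsilon^a$- and $A_\epsilon^b$-pieces are pulled back through $\B Z$ and hence vanish in $\rEH_b^n$, and (iii) the $B_\epsilon$-piece has seminorm at most $\mu(B_\epsilon)\|c\|_{\sup}\to 0$. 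This gives $\|\rho_\epsilon^*\rEGb(c)-\Lambda\, i^*(c)\|\to 0$ directly in the reduced norm, with no need to analyze primitives.
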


Let $\pi_M = \pi_1(M)/Z(\pi_1(M))$, 
where $Z(\pi_1(M))$~is the center of $\pi_1(M)$.

\begin{theorem}\label{t:hyp.embed} Let $M$ be as above and
suppose that $\OP{F}_2 \times K$ embeds hyperbolically into $\pi_M$, where $K$ is a~finite group. 
Then for every $n$
$$
\OP{dim} \overbar{\OP{EH}}_{b}^n(\C T_M) \geq \OP{dim} \overbar{\OP{EH}}_{b}^n(\OP{F}_2),
$$
where $\C T_M $ is either $\OP{Homeo}_0(M,\mu)$, or $\OP{Diff}_0(M,\OP{vol})$, or $\OP{Symp}_0(M,\omega)$.
\end{theorem}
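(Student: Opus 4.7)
The plan is to construct, for each $n$, an injective linear map $\rEH_b^n(\OP{F}_2) \hookrightarrow \rEH_b^n(\C T_M)$, from which the dimension inequality is immediate. The strategy parallels Theorem \ref{t:surjects.on.F}, with the surjection $\pi_M \twoheadrightarrow \OP{F}_2$ used there replaced by the hyperbolic embedding $\OP{F}_2 \times K \hookrightarrow \pi_M$ together with an extension theorem for hyperbolically embedded subgroups.

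I would factor the desired map as the composition
\[
\rEH_b^n(\OP{F}_2) \xrightarrow{\cong} \rEH_b^n(\OP{F}_2 \times K) \hookrightarrow \rEH_b^n(\pi_M) \xrightarrow{\cong} \rEH_b^n(\pi_1(M)) \xrightarrow{\Gb} \rEH_b^n(\C T_M).
\]
The first arrow is pullback along the projection $\OP{F}_2 \times K \to \OP{F}_2$; it is an isomorphism because $K$ is finite, hence amenable, and taking a direct product with an amenable group is invisible to bounded cohomology. The second arrow is the extension map for hyperbolically embedded subgroups (Frigerio-Pozzetti-Sisto, generalising the degree-two construction of Hull-Osin), which under the hypothesis provides a split injection on reduced exact bounded cohomology in every degree. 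The third arrow is pullback along $\pi_1(M) \twoheadrightarrow \pi_M$; it is an isomorphism by the Gromov-Monod theorem on amenable normal subgroups, applied to $Z(\pi_1(M))$. The fourth arrow is the map $\Gb$, whose descent to $\rEH_b^n$ was already established earlier in the paper.

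The main obstacle is injectivity of the composition, which reduces to showing that $\Gb$ does not annihilate classes lifted through the first three arrows. This is the same type of detection problem that underlies Theorem \ref{t:surjects.on.F}: for any nonzero class pulled back from a free quotient, one exhibits a tuple in $\C T_M$ whose $\Gb$-pairing with the class is nonzero. I would reproduce that mechanism here by choosing test diffeomorphisms whose induced loops, read in $\pi_1(M)$ and then modulo the centre, lie in the hyperbolically embedded copy of $\OP{F}_2 \times K$; the coarse projection data supplied by the Dahmani-Guirardel-Osin theory of hyperbolic embeddings should keep the resulting pairings non-degenerate. The subtle technical point requiring care is that the cocycle produced by the Frigerio-Pozzetti-Sisto extension be shown to pair non-trivially with these tailored $\Gb$-configurations, rather than being concentrated on parts of $\pi_M$ that $\Gb$ cannot see; this is what prevents the argument from being a formal consequence of Theorem \ref{t:surjects.on.F}.
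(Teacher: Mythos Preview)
Your overall plan is sound and uses the same essential ingredients as the paper: the FPS extension result for hyperbolically embedded subgroups, the identification $\rEH_b^\bullet(\OP{F}_2 \times K) \cong \rEH_b^\bullet(\OP{F}_2)$, and detection via finger-pushing representations $\rho_\epsilon \colon \OP{F}_2 \to \C T_M$. The paper, however, organises the argument slightly differently: rather than constructing an explicit injection $\rEH_b^n(\OP{F}_2) \hookrightarrow \rEH_b^n(\C T_M)$, it proves the kernel inclusion $\ker(\rEGb) \subset \ker(i^*)$ (where $i \colon \OP{F}_2 \to \pi_M$ is the embedding restricted to $\OP{F}_2 \times \{e\}$) and then deduces the dimension inequality from surjectivity of $i^*$, which is what FPS actually supplies. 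Your formulation via a chosen section of $i^*$ is equivalent, but it obscures the fact that the detection step is uniform over all of $\rEH_b^\bullet(\pi_M)$, not tailored to FPS-extended classes.

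Your final paragraph overcomplicates the detection. Lemma~\ref{l:rep} (the same lemma behind Theorem~\ref{t:surjects.on.F}) already gives, for \emph{every} class $c \in \rEH_b^\bullet(\pi_M)$, the estimate $\|\rho_\epsilon^* \rEGb(c) - \Lambda\, i^*(c)\| \to 0$. No coarse projection data from Dahmani--Guirardel--Osin is needed, and there is no issue of where the FPS-extended cocycle is ``concentrated'': the finger-pushing maps $\rho_\epsilon(w)$ satisfy that $\gamma(\rho_\epsilon(w),x)$ is conjugate to $i(w)$ on a set of positive measure, regardless of which cocycle $c$ one then integrates. Hence $i^*(c) \neq 0$ forces $\rEGb(c) \neq 0$, and that is the whole detection argument. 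The hyperbolic-embedding hypothesis enters only to guarantee surjectivity of $i^*$ (equivalently, existence of the section you invoke), not in the detection itself.
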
	

The notion of hyperbolic embedding was defined in \cite{MR3589159}. 
Note that the statement of Theorem \ref{t:hyp.embed} holds if $\pi_1(M)$ is a~center-free acylindrically hyperbolic group, 
since such a~group always admits a~hyperbolically embedded $\OP{F}_2 \times K$, 
see \cite[Theorem 1.2]{MR3430352} and \cite[Theorem 2.24]{MR3589159}. 
Examples of acylindrically hyperbolic groups include:
\begin{enumerate}
\item non-elementary hyperbolic groups and relatively hyperbolic groups,
\item mapping class groups of hyperbolic surfaces and outer automorphism groups of non-abelian free groups, 
\item most $3$-manifolds groups,
\item right angled Artin groups that are not direct products.
\end{enumerate}

Theorems \ref{t:surjects.on.F} and \ref{t:hyp.embed} imply that 
bounded cohomology of $\C T_M$ is non trivial in dimension $3$,
which is a new result. Indeed, due to a theorem of Soma \cite{soma}, 
the dimension of the third reduced exact bounded cohomology of $\OP{F}_2$ is continuum, and
hence we obtain the following

\begin{corollary*}
If the conditions of Theorem \ref{t:surjects.on.F} or Theorem \ref{t:hyp.embed} hold, then 
$$
\OP{dim} \rEH^3_{b}(\C T_M) \geq 2^{\aleph_0}.
$$
\end{corollary*}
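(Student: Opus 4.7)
The plan is essentially mechanical: the corollary is advertised as a direct consequence of the preceding theorems combined with Soma's dimension count, so my proof would simply execute that combination in degree $n=3$.

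First, I would invoke the result of Soma \cite{soma} cited just above the statement, which asserts that $\OP{dim}\,\rEH_b^3(\OP{F}_2) = 2^{\aleph_0}$. This is the nontrivial input; I would take it as a black box.

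Next, I would split into the two cases corresponding to the two hypotheses. Under the assumption of Theorem \ref{t:surjects.on.F}, applying that theorem with $n=3$ produces an injective homomorphism of real vector spaces
$$
\rEH_b^3(\OP{F}_2) \hookrightarrow \rEH_b^3(\C T_M),
$$
and since an injective linear map cannot decrease dimension, the right-hand side has dimension at least $2^{\aleph_0}$. Under the assumption of Theorem \ref{t:hyp.embed}, that theorem with $n=3$ gives the dimension inequality
$$
\OP{dim}\,\rEH_b^3(\C T_M) \ \geq\ \OP{dim}\,\rEH_b^3(\OP{F}_2)
$$
directly, and the conclusion again follows from Soma's count. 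In either case one obtains $\OP{dim}\,\rEH_b^3(\C T_M) \geq 2^{\aleph_0}$.

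There is essentially no obstacle here, since all the real content has been packaged into Theorems \ref{t:surjects.on.F} and \ref{t:hyp.embed} and into Soma's theorem. The only point requiring mild attention is that Soma's statement should be interpreted in the reduced exact bounded cohomology $\rEH_b^\bullet$ as defined in the paper's preliminaries, but this matches the convention under which the two theorems above were stated, so no translation is required.
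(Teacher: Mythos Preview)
Your proposal is correct and matches the paper's approach exactly: the corollary is stated immediately after the sentence invoking Soma's result, and is treated as an immediate consequence of combining that dimension count with Theorems~\ref{t:surjects.on.F} and~\ref{t:hyp.embed} in degree $n=3$. There is nothing further in the paper's treatment that your write-up is missing.
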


Unfortunately we are not able to obtain a similar result for $n>3$, 
since nothing is known about the dimension of $\rEH^n_{b}(\OP{F}_2)$.
We also would like to mention that versions of Theorem \ref{t:surjects.on.F} 
and Theorem~\ref{t:hyp.embed} hold in a~more general setting, 
see Remark \ref{r:general}.

We construct non-trivial classes in $\rEH_b^3(\C T_M)$ 
out from non-trivial classes in $\rEH_b^3(\OP{F}_2)$.
There is a nice family of elements in $\rEH_b^3(\OP{F}_2)$, that are represented by cocycles 
which are defined in terms of volumes of geodesic simplices in the hyperbolic $3$-space $\B H^3$. 
The elements of $\rEH_b^3(\C T_M)$, which are constructed from such classes in Theorem~\ref{t:surjects.on.F}, 
have similar geometrical description, see Remark \ref{r:int.volume}.

We would like to mention that elements of $\OP{H}_b^n(\C T_M)$ can be interpreted as bounded characteristic 
classes of foliated $M$-bundles with holonomy in $\C T_M$. 
Such classes were studied in the case of ordinary cohomology 
for groups $\OP{Diff}(M,\OP{vol})$ and $\OP{Symp}(M,\omega)$, see e.g., 
\cite{kontsevich, kot.mor.sympl, kot.mor.vol, sam.stability, rez99, rez97}. 
The classes constructed in this paper are of entirely different nature.  

\textbf{Acknowledgments}.
Both authors were supported by SFB 1085 ``Higher Invariants'' funded by Deutsche Forschungsgemeinschaft. 
The second author was supported by grant Sonatina 2018/28/C/ST1/00542 funded by Narodowe Centrum Nauki.

\section{Preliminaries}\label{s:prelimin}
\subsection{Bounded cohomology}
Bounded cohomology was defined in a seminal paper of Gromov \cite{gromov82}. 
Let us recall basic definitions. 

Let $G$ be a~group. 
A function $c \colon G^{n+1} \to \mathbb{R}$ is called homogeneous, 
if for every $h \in G$ and every $g_0,\ldots,g_n\in G$ we have 
$$c(g_0h,\ldots,g_nh) = c(g_0,\ldots,g_n).$$ 
The space of bounded $n$-cochains is defined by
$$\OP{C}^n_b(G) = \{c \colon G^{n+1} \to \mathbb{R}~|~c~\text{is homogeneous and bounded}\}.$$
Let $d$ be the ordinary coboundary operator $d_n \colon  \OP{C}^n_b(G) \to \OP{C}^{n+1}_b(G)$. 
The \textbf{bounded cohomology} of $G$, denoted $\OP{H}^\bullet_b(G)$, is the homology of the chain complex $\{\OP{C}^n_b(G),d_n\}$.
Note that $\OP{C}_b^n(G)$ is a~subcomplex of the space of all homogeneous cochains, hence we have a 
homomorphism $\OP{H}_b^n(G) \to \OP{H}^n(G,\mathbb{R})$ called the \textbf{comparison map}. 
The \textbf{exact bounded cohomology}, denoted $\OP{EH}_b^n(G)$, 
is defined to be the kernel of the comparison map. 
A bounded class belongs to $\OP{EH}_b^n(G)$ if it is the coboundary of a cochain. 
It is non-trivial, if it is not the coboundary of a~\textit{bounded} cochain.

On $\OP{C}^n_b(G)$ we have the supremum norm denoted by $\n\hspace{-3px}\cdot\hspace{-3px}\n_{sup}$.
This norm induces a~semi-norm on $\OP{H}^n_b(G)$, i.e.,
if $c \in \OP{H}_b^n(G)$, then 
$$\n c \n = \OP{min}\{\n a \n_{sup}~|~[a] = c\}.$$
Let 
$$\OP{N}^n(G) = \{c \in \OP{H}_b^n(G)~|~\n c \n = 0\}.$$
Since $\n\hspace{-3px}\cdot\hspace{-3px}\n$ is a seminorm, $\OP{N}^n(G)$ is a linear subspace of $\OP{H}_b^n(G)$.
The \textbf{reduced bounded cohomology} is defined by 
$$\rH^n_b(G) = \OP{H}_b^n(G)/\OP{N}^n(G).$$
Note that $\rH^n_b(G)$ equipped with the induced norm, again denoted by $\n\hspace{-3px}\cdot\hspace{-3px}\n$,
is a Banach space. 
The \textbf{exact reduced bounded cohomology} is a Banach subspace of $\rH^n_b(G)$ defined by 
$$\rEH_b^n(G)~=~\OP{EH}^n_b(G)/(\OP{EH}^n_b(G) \cap \OP{N}^n(G)).$$

\subsection{Measurable cocycles and bounded cohomology}\label{s:induction}

Let $G$ be a~topological group, $H$ a~discrete group and $(X,\mu)$ a~measurable space. 
Suppose that $G$ acts on $X$ by measure preserving homeomorphisms. 
A map $\gamma \colon G \times X \to H$ is called a~\textbf{measurable cocycle}, if
for every $g \in G$ the map $\gamma(g,\cdot)$ is measurable and for all $g_1,g_2 \in G$ and for  
almost all $x \in X$ we have
$$\gamma(g_1g_2,x) = \gamma(g_1,g_2(x))\gamma(g_2,x).$$ 

Note that if $X$ is a point, then $\gamma$ is a homomorphism from $G$ to $H$. 

We show that a measurable cocycle $\gamma$ induces a homomorphism on bounded cohomology
$$\OP{Ind}_b(\gamma) \colon \OP{H}_b^n(H) \to \OP{H}_b^n(G).$$
To define $\OP{Ind}_b(\gamma)$, we first define a map 
$\OP{Ind}'_b(\gamma) \colon \OP{C}^n_b(H) \to \OP{C}^n_b(G)$ 
by the formula:
$$
\OP{Ind}'_b(\gamma)(c)(g_0,g_1,\ldots,g_n) = 
\int_{M} c(\gamma(g_0,x),\gamma(g_1,x),\ldots,\gamma(g_n,x)) d\mu(x),
$$
where $c \in \OP{C}^n_b(H)$. 

The next proposition shows, that $\OP{Ind}'_b(\gamma)$ is well-defined and commutes with $d$. 
Hence we can define 
$$\OP{Ind}_b(\gamma)([c]) = [\OP{Ind}'_b(\gamma)(c)],$$ 
where $c \in  \OP{C}_b^n(H)$ and $[c] \in \OP{H}_b^n(H)$. 

\begin{proposition}\label{p:well.def}
Let $G$ be a group, $c \in \OP{C}_b^n(H)$ and let $g_0,g_1,\ldots,g_n~\in~G$. Then the following holds: 
$$x \to c(\gamma(g_0,x),\gamma(g_1,x),\ldots,\gamma(g_n,x))$$
is a $\mu$-measurable function on $X$, 
$\OP{Ind}'_b(\gamma)$ commutes with the coboundary $d$, 
and $\OP{Ind}'_b(\gamma)(c)$ is a homogeneous cocycle.
\end{proposition}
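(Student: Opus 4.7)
The plan is to verify the three claims in turn. Only the homogeneity assertion uses the substance of the hypotheses (the cocycle identity together with measure-invariance of the $G$-action); the other two are essentially bookkeeping.

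For measurability, I would use that $H$ is discrete. By assumption each coordinate map $\gamma(g_i,\cdot)\colon X \to H$ is measurable, and with $H$ given the discrete $\sigma$-algebra every real-valued function on $H^{n+1}$ is automatically measurable. Therefore the composition
\[
x \longmapsto c(\gamma(g_0,x),\gamma(g_1,x),\ldots,\gamma(g_n,x))
\]
is $\mu$-measurable. Since $c$ is bounded by $\n c \n_{sup}$ and $\mu$ is finite in the setting we care about, the integrand is integrable and $\OP{Ind}'_b(\gamma)(c)$ is a well-defined bounded real-valued function on $G^{n+1}$.

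Compatibility with the coboundary reduces to linearity of the integral: $dc$ is an alternating sum of evaluations of $c$ with one argument omitted, and integrating term by term exchanges the finite sum with the integral, producing exactly $d(\OP{Ind}'_b(\gamma)(c))$ applied to $(g_0,\ldots,g_{n+1})$.

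The main step is homogeneity. Fix $h \in G$. For almost every $x$, the cocycle identity yields $\gamma(g_i h, x) = \gamma(g_i, h(x))\gamma(h,x)$ simultaneously for all $i = 0,\ldots,n$ (the exceptional null set depends on $h$ but is harmless under integration). The integrand defining $\OP{Ind}'_b(\gamma)(c)(g_0h,\ldots,g_nh)$ then becomes
\[
c\bigl(\gamma(g_0,h(x))\gamma(h,x),\ldots,\gamma(g_n,h(x))\gamma(h,x)\bigr),
\]
and the common right factor $\gamma(h,x)$ can be cancelled by the homogeneity of $c$ on $H$. The change of variables $y = h(x)$, justified by $h$ acting on $(X,\mu)$ as a measure-preserving homeomorphism, then rewrites the resulting integral as $\OP{Ind}'_b(\gamma)(c)(g_0,\ldots,g_n)$. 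I do not anticipate any genuine obstacle here; the proposition is really a matter of unpacking definitions, with the homogeneity step being the one place where the cocycle hypothesis and measure-preservation are both needed.
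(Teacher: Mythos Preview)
Your proposal is correct and follows essentially the same route as the paper: measurability via discreteness of $H$, commutation with $d$ by linearity of the integral, and homogeneity by applying the cocycle identity, cancelling the common right factor $\gamma(h,x)$ via homogeneity of $c$, and then using the measure-preserving change of variables $x \mapsto h(x)$. The paper's write-up is terser but the ideas are identical.
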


\begin{proof}
The function $X \to H^n$ defined by 
$$x \to (\gamma(g_0,x),\gamma(g_1,x),\ldots,\gamma(g_n,x))$$
is $\mu$-measurable and $c$ is continuous. 
Thus their composition is $\mu$-measurable. 

Commutativity of $\OP{Ind}'_b(\gamma)$ and $d$ 
follows directly from the definition of $\OP{Ind}'_b(\gamma)$. 

Let $h ,g_0,\ldots \in G$, we have:
		\begin{align*}
			\OP{Ind}'_b(\gamma)(c)(g_0h,\ldots)& = \int_{X} c(\gamma(g_0h,x),\ldots) d\mu(x)\\
			&=  \int_{X} c(\gamma(g_0,h(x))\gamma(h,x),\ldots) d\mu(x)\\
			&=  \int_{X} c(\gamma(g_0,h(x)),\ldots) d\mu(x)\\
			&=  \int_{X} c(\gamma(g_0,x),\ldots) dh^*\mu(x)  = \OP{Ind}'_b(\gamma)(c)(g_0,\ldots).
		\end{align*}
The above equalities follow from the cocycle condition, the homogeneity of $c$ 
and the fact that $\mu$ is $h$-invariant. 
\end{proof}

We conclude this section by noting 
that maps similar to $\OP{Ind}_b(\gamma)$ were used in the study of ordinary 
continuous cohomology of Lie groups \cite{guichardet} and 
geometry of solvable and amenable groups \cite{sauer,shalom}. 

\section{Definition of $\Gb$}\label{s:def}
In this section we construct the homomorphism
$$\Gb\colon \OP{H}_b^\bullet(\pi_1(M)) \to \OP{H}_b^\bullet(\OP{Homeo}_0(M,\mu)).$$

\subsection{The cocycle}\label{s:cocycle}
Denote by $\C H_M=\OP{Homeo}_0(M)$ the identity component of the group of compactly supported 
homeomorphisms of $M$. 
Let $z \in M$ be a basepoint and 
let $\C H_{M,z}<\C H_M$ be the subgroup of all homeomorphisms in $\C H_M$ that fix $z$. 
Consider the following fiber bundle 
$$\C H_{M,z} \to \C H_M \xrightarrow{ev_z} M,$$
where $ev_z$ is the evaluation map at the basepoint $z$, i.e., $ev_z(g) = g(z)$ for $g \in \C H_M$,
and $\C H_{M,z}$ is the fiber of $ev_z$. 


Let $ev_z^* \colon \pi_1(\C H_M) \to \pi_1(M,z)$ be induced by $ev_z$. 

\begin{proposition}\label{p:delta}
The image of $ev_z^*$ is contained in the center of $\pi_1(M,z)$.
\end{proposition}

\begin{proof}
Let $g_t$, $t\in S^1$, be a~loop in $\C H_M$ based at the identity and $[g_t] \in \pi_1(\C H_M)$. 
Then $ev_z^*([g_t])$~is a~loop based at $z$ represented by $g_t(z)$.
Let $l_s$, $s\in S^1$, be an arbitrary loop in $M$ based at $z$.
The image of the map $S^1 \times S^1 \to M$ given by $(t,s) \to g_t(l_s)$
contains $g_t(z)$ and $l_s$, thus these loops commute in~$\pi_1(M,z)$.
\end{proof}

Let us consider the long exact sequence of homotopy groups of the fibration
$\C H_{M,z} \to \C H_M \xrightarrow{ev_z} M$:
$$\pi_1(\C H_M) \xrightarrow{ev_z^*} \pi_1(M,z) \to \pi_0(\C H_{M,z}) \to \pi_0(\C H_M) = 1.$$

It follows from this exact sequence that $\pi_0(\C H_{M,z}) \cong \pi_1(M,z)/\OP{im}(ev_z^*)$. 
We define 
$$\delta \colon \C H_{M,z} \to \pi_M$$ 
to be the composition of the map
$$\C H_{M,z} \to \pi_0(\C H_{M,z}) \cong \pi_1(M,z)/\OP{im}(ev_z^*)$$ and
the quotient map $\pi_1(M,z)/\OP{im}(ev_z) \to \pi_{M}$.

Let $\C H_{M,\mu} = \OP{Homeo}_0(M,\mu)$ and let $s \colon M \to \C H_M$ be a~measurable section of $ev_z$, 
i.e., $ev_z \circ s_x = x$ for almost all $x \in M$.
We define a~measurable cocycle
$$\gamma_s \colon \C H_{M,\mu} \times M \to \pi_M$$
by the following formula
$$\gamma_s(g,x) = \delta(s_{g(x)}^{-1} \circ g \circ s_x).$$

It follows immediately from the definition that $\gamma_s$ satisfies the cocycle condition.

\subsection{Example of a~section \textit{s}}\label{s:example}
Let us consider the following set:
$$ D = \OP{int}(\{~x \in M~|~\text{there exists a~unique geodesic between}~z~\text{and}~x~\}).$$
The set $M\setminus D$ is called the cut locus of $M$. The Hausdorff dimension of $M\setminus D$ is at most
$\OP{dim}(M)-1$, see \cite{cut.locus}. Thus $\mu(M\setminus D) = 0$, and $\mu(D) = \mu(M)$. 
Let $x \in D$ and let $s_x$ be a~point-pushing map that transports $z$ to $x$ along the geodesic.
We choose these point-pushing maps such that they define a continuous section $s \colon D \to \C H_M$.
Since we regard $s$ as a~measurable map it is enough to define $s$ on a full measure subset of $M$. 

Let us now take a~closer look at the cocycle $\gamma_s$ defined by such $s$. 
Let $g \in \C H_M$ and $x \in D \cap g^{-1}(D) \subset M$.
The element $\gamma_s(g,x) \in \pi_{M}$ has a~simple geometrical interpretation. 
It can be constructed as follows: let $g_t$, $t \in [0,1]$, be any isotopy in $\C H_M$~connecting the identity to $g$. 
Let $\alpha$ be the concatenation of the geodesic from $z$ to $x$, the path $g_t(x)$, $t \in [0,1]$, 
and the geodesic from $g(x)$ to $z$. 
It is clear that $\alpha$ is a~loop based at $z$. Denote its homotopy class by $[\alpha]\in\pi_1(M)$. 
The element $\gamma_s(g,x)\in \pi_{M}$ is a coset represented by $[\alpha]$.   
 
\subsection{The definition of $\Gb$}
Let $\gamma_s \colon \C H_{M,\mu} \times M \to \pi_M$ be a~measurable cocycle
given by a~measurable section $s \colon M \to \C H_M$. 
We define
$$\Gb = \OP{Ind}_b(\gamma_s) \colon \OP{H}_b^\bullet(\pi_M) \to \OP{H}_b^\bullet(\C H_{M,\mu}).$$
Note that the quotient map $\pi_1(M) \to \pi_{M}$ has an abelian kernel. 
It follows from \cite[Section 3.1]{gromov82} that the induced map
$$\OP{H}_b^\bullet(\pi_{M}) \to \OP{H}_b^\bullet(\pi_1(M))$$
in an isometric isomorphism, hence $\Gb$ takes the following form:
$$\Gb \colon \OP{H}_b^\bullet(\pi_1(M)) \to \OP{H}_{b}^\bullet(\C H_{M,\mu}).$$ 

Let $\C T_{M}$ be a subgroup of $\C H_{M,\mu}$. The composition of $\Gb$ with the restriction map
$\OP{H}_b^\bullet(\C H_{M,\mu}) \to \OP{H}_b^\bullet(\C T_M)$ gives
$$\Gb(\C T_{M}) \colon \OP{H}_b^\bullet(\pi_1(M)) \to \OP{H}_{b}^\bullet(\C T_{M}).$$
Usually we abuse the notation and write $\Gb$ instead of $\Gb(\C T_{M})$.  

\subsection{Standard cohomology and exact bounded cohomology}\label{s:standard}
Let $s$ be the section defined in Subsection \ref{s:example} and $\gamma = \gamma_s$.
We show that for such $\gamma$, a similar map can be defined for the ordinary cohomology. 
Note that in the case of the ordinary cohomology the definition 
requires more effort then in the case of the bounded cohomology.
The reason is that now cocycles are not bounded and we need to show that the integral exists. 

Let $x \in M$, $c \in \OP{C}^n(\pi_M)$ and $g_0,\ldots,g_n \in \C H_{M,\mu}$.
Consider the function $x \to c(\gamma(g_0,x),\ldots,\gamma(g_n,x))$.
It follows from Proposition \ref{p:well.def} that it is measurable. 
Integrability follows from Lemma \ref{l:fin.many.loops} below, since
every measurable function with essentially finite image is integrable. 
Note that Lemma \ref{l:fin.many.loops} holds only for sections described in Subsection \ref{s:example},
and in general not every section induces an integrable function. 
We define
$$\OP{Ind}'(\gamma)(c)(g_0,g_1,\ldots,g_n) = 
\int_{M} c(\gamma(g_0,x),\gamma(g_1,x),\ldots,\gamma(g_n,x)) d\mu(x).$$
It follows immediately from Proposition \ref{p:well.def} that $d$ 
and $\OP{Ind}'(\gamma)$ commute and that $\OP{Ind}'(\gamma)(c)$ is homogeneous.
Hence $\OP{Ind}'(\gamma)$ induces
$$
\Gamma \colon \OP{H}^\bullet(\pi_M) \to \OP{H}^\bullet(\C H_{M,\mu}).
$$

Let $\C T_M \leq \C H_{M,\mu}$ and $i^* \colon \OP{H}^\bullet(\C H_{M,\mu}) \to \OP{H}^\bullet(\C T_M)$ is induced by the inclusion. Define
$$
\Gamma(\C T_M) \defeq i^* \circ \OP{Ind}'(\gamma) \colon \OP{H}^\bullet(\pi_M) \to \OP{H}^\bullet(\C T_M).
$$

Let us discuss Lemma \ref{l:fin.many.loops}. 
Since in this lemma measure preservation does not play any role, 
it is natural to extend the cocycle $\gamma$ to $\OP{Homeo}_0(M)$.
Namely, let
$$
\gamma' \colon \OP{Homeo}_0(M) \times M \to \pi_M,
$$
be defined by the same formula as $\gamma$, i.e, 
$$\gamma'(g,x) = \delta(s_{g(x)}^{-1} \circ g \circ s_x),$$
where $g \in \OP{Homeo}_0(M)$. Let $X$ be a~measurable space and $f \colon X \to Y$ be a~measurable function. 
We say that $f$ has \textbf{essentially finite image}, 
if there exists a~full measure subset $Z \subset X$, such that $f$ has a finite image in $Z$. 

\begin{lemma}\label{l:fin.many.loops}
For every $f\in\OP{Homeo}_0(M)$ the map $\gamma'(f,\cdot\hspace{1px})\colon M\to\pi_M$ has essentially finite image. 
\end{lemma}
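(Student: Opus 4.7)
The plan is to work in the universal cover $\tilde M$ and realize $\gamma'(f,x)$ as a deck transformation whose $\tilde z$-translate stays in a fixed compact ball. First I would choose an isotopy $\{f_t\}_{t\in[0,1]}$ from the identity to $f$ whose total support lies in a single compact set $K\subset M$; let $p\colon\tilde M\to M$ be the universal covering, fix a lift $\tilde z$ of $z$, and let $\tilde f_t\colon\tilde M\to\tilde M$ be the unique lift with $\tilde f_0=\OP{id}$. Because the original isotopy is the identity outside $K$, the lift $\tilde f:=\tilde f_1$ is the identity outside $p^{-1}(K)$. I would also define the continuous map $\ell\colon D\to\tilde M$ sending $x$ to the endpoint of the lift, starting at $\tilde z$, of the unique minimizing geodesic from $z$ to $x$; by construction $d_{\tilde M}(\tilde z,\ell(x))=d_M(z,x)$.

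For $x$ in the full-measure set $V:=D\cap f^{-1}(D)$, both $\tilde f(\ell(x))$ and $\ell(f(x))$ are lifts of $f(x)$, so there is a unique deck transformation $\psi(x)\in\pi_1(M)$ with $\tilde f(\ell(x))=\psi(x)\cdot\ell(f(x))$. Lifting to $\tilde M$ the loop from the geometric description of $\gamma_s$ given in Subsection~\ref{s:example} shows that $\gamma'(f,x)$ equals the image of $\psi(x)$ in $\pi_M$, so it suffices to prove that $\psi\colon V\to\pi_1(M)$ has finite image.

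For $x\notin K$ we have $f(x)=x$ and $\ell(x)\notin p^{-1}(K)$, hence $\tilde f(\ell(x))=\ell(x)=\ell(f(x))$ and $\psi(x)=e$. For $x\in V\cap K$, set $R:=\sup_{y\in K}d_M(z,y)$; since $\tilde M$ is complete the closed ball $\overline{B}_{\tilde M}(\tilde z,R)$ is compact by Hopf--Rinow, so the continuous map $\tilde f$ sends it into some $\overline{B}_{\tilde M}(\tilde z,R')$ with $R'<\infty$. Using that $\psi(x)$ acts isometrically and that $f(x)\in K$, one obtains
\[
d_{\tilde M}(\psi(x)\tilde z,\tilde z)\ \leq\ d_{\tilde M}(\tilde z,\ell(f(x)))+d_{\tilde M}(\tilde f(\ell(x)),\tilde z)\ \leq\ R+R'.
\]
Since $\pi_1(M)\cdot\tilde z$ is a discrete subset of $\tilde M$, only finitely many orbit points lie in the compact ball $\overline{B}_{\tilde M}(\tilde z,R+R')$, so $\psi$, and hence $\gamma'(f,\cdot)$, takes only finitely many values on $V$.

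The step I expect to require the most care is the very first one: justifying that every $f\in\OP{Homeo}_0(M)$ can be joined to the identity through an isotopy whose total support lies in a single compact subset of $M$. This is essentially a fragmentation-type property of $\OP{Homeo}_c(M)$ with the $C^0$-topology; once it is in place, the remaining argument is clean bounded-geometry bookkeeping in the universal cover.
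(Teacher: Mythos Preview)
Your argument is correct and takes a genuinely different route from the paper. The paper proceeds by invoking the Edwards--Kirby fragmentation theorem to write $f$ as a product of homeomorphisms supported in small balls, then uses smooth approximation and the Milnor--\v{S}varc lemma to bound the word length of each factor's contribution to $\gamma'(f,x)$. You bypass all of this by passing to the universal cover: the identity $\tilde f(\ell(x))=\psi(x)\cdot\ell(f(x))$ lets you read off $\gamma'(f,x)$ as a deck transformation, and then a direct Hopf--Rinow argument shows $\psi(x)\tilde z$ lies in a fixed compact ball, hence in a finite subset of the discrete orbit $\pi_1(M)\cdot\tilde z$. Your approach is more elementary and conceptually cleaner; the paper's fragmentation step is not actually needed for this lemma, and its invocation of Milnor--\v{S}varc implicitly relies on the same finiteness-in-a-compact-ball principle you use directly. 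The step you flag as delicate---that an element of $\OP{Homeo}_0(M)$ admits an isotopy with compact total support---is indeed the one nontrivial input, and the paper's proof assumes it as well (``the union of the supports $\bigcup_{t}\OP{supp}(f_t)$ is a compact subset of $M$''), so you are not missing anything the paper supplies.
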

\begin{proof}
Let $f\in\OP{Homeo}_0(M)$ and $\{f_t\}$ be an isotopy between the identity and $f$.
The union of the supports $\bigcup_{t\in[0,1]}\OP{supp}(f_t)$ is a~
compact subset of $M$. Recall that $M$ admits a~complete Riemannian metric.
Hence there exists $r>0$ such that the geodesic ball $B_r(z)$ of 
radius $r$ centered at $z$ contains $\bigcup_{t\in[0,1]}\OP{supp}(f_t)$.
Note that for each $x$ lying in the full measure subset of  $M\setminus B_r(z)$ the element $\gamma'(f,x)$ 
is trivial in $\pi_M$. Hence it is enough to show that the
set $\{\gamma'(f,x)\}$, where $x$ belongs to the full measure
subset of $B_r(z)$, is finite in $\pi_1(B_r(z), z)$. 
We consider $\gamma'(f,x)$ as an element of $\pi_1(M,z)$. 

The group $\OP{Homeo}_0(M)$ admits a~fragmentation property with 
respect to any open cover of $M$, see \cite[Corollary 1.3]{MR0283802}.
Hence the ball $B_r(z)$ can be covered by finite number of balls $B_i$ with the following property: 
$f$ can be written as a product of homeomorphisms $h_i$
such that the support of $h_i$ lies in $B_i$. Since $M$ is a~smooth manifold, for each $i$ there 
exits a~smooth ball $B'_i$, such that it is $\epsilon$-close to $B_i$ and such that 
it is $\epsilon$-homotopic to $B_i$, see smooth approximation theorem \cite[Theorem 2.11.8]{MR1224675}.
Note that $\gamma'(f,x)$ satisfies the cocycle condition. It means that
$$\gamma'(f,x)=\gamma'(h_1\circ\ldots\circ h_n, x)=\gamma'(h_1,h_2\circ\ldots\circ h_n(x))\ldots\gamma'(h_n,x).$$
Hence it is enough to prove that the set $\{\gamma'(h_i,x)\}$ where $x$ belongs to the full
measure subset of $B_i$ is finite in $\pi_1(B_r(z), z)$.

The ball $B'_i$ is smooth, thus it has finite diameter $d_i$. 
The group of homeomorphisms of a ball is connected.
Every path inside $B_i$ can be free $\epsilon$-homotoped  
to a path in $B'_i$ and hence to a path whose Riemannian length is less than the diameter $d_i$. 
Thus $\gamma'(h_i,x)$ can be represented by a~path whose Riemannian length 
is less than $d_i+2(r_i+\epsilon)$, where $r_i$ is the radius of the geodesic ball $B_{r_i}(z)$ which contains $B_i$. 
By Milnor-Svarc lemma \cite{MR1744486} the word length of $\gamma'(h_i,x)$ is bounded in $\pi_1(B_r(z), z)$.
\end{proof}

We have the following commutative diagram
$$
\begin{tikzcd}
	\OP{H}^\bullet(\pi_M) \arrow[r, "\Gamma"] & \OP{H}^\bullet(\C T_{M})\\
	\OP{H}_b^\bullet(\pi_M) \arrow[r, "\Gb"] \arrow[u] & \OP{H}_b^\bullet(\C T_M) \arrow[u]
\end{tikzcd}
$$
It follows that $\Gb$ can be restricted to the exact part of the bounded cohomology.
$$
\EGb(\C T_M) \colon \OP{EH}_b^\bullet(\pi_M) \to \OP{EH}_b^\bullet(\C T_M).
$$

\begin{remark}\label{r:quasi}
We would like to point out that $\OP{EH}_b^2(G)$ is the space of non-trivial 
homogeneous quasimorphisms on $G$ \cite[Chapter 2]{Calegari}, and
$\EGb(\C T_M) \colon \OP{EH}_b^2(\pi_M) \to \OP{EH}_b^2(\C T_M)$
is the map defined by Polterovich \cite{MR2276956}.
\end{remark}

\subsection{The reduced bounded cohomology}
It is straightforward to see, that $\Gb \colon \OP{H}_b^\bullet(\pi_1(M)) \to \OP{H}_b^\bullet(\C H_M)$ is a~contraction.
Hence it defines a~map on the reduced bounded and the reduced exact bounded cohomology.
$$\rGb(\C T_M) \colon \OP{\overbar{H}}_b^\bullet(\pi_1(M)) \to \OP{\overbar{H}}_b^\bullet(\C T_M),$$
$$\rEGb(\C T_M) \colon \OP{\overbar{EH}}_b^\bullet(\pi_M) \to \OP{\overbar{EH}}_b^\bullet(\C T_M).$$
 
\section{Proofs}\label{s:proofs}

Let $M$ be a~complete Riemannian manifold of finite volume,
and $\C T_{M}$ is either $\OP{Homeo}_0(M,\mu)$ or $\OP{Diff}_0(M,\OP{vol})$.
If, in addition $M$ is a~symplectic manifold, then $\C T_{M}$ may also be $\OP{Symp}_0(M,\omega)$. 
Throughout this section we consider the map $\rEGb$ induced by $\gamma = \gamma_s$, 
where $s$ is the section defined in Subsection~\ref{s:example}.

First, let us present an outline of both proofs.
Assumptions of Theorem \ref{t:surjects.on.F} and Theorem \ref{t:hyp.embed} imply 
that there is an embedding $i \colon \OP{F}_2 \to \pi_M$ such that 
in both cases $i^* \colon \rEH_b^\bullet(\pi_M) \to \rEH_b^\bullet(\OP{F}_2)$ is surjective. 
Indeed, in Theorem \ref{t:surjects.on.F} it is straightforward, and
in Theorem \ref{t:hyp.embed} we use the result presented in \cite{fps}, 
which in particular, implies that if $\OP{F}_2 \times K$ 
is hyperbolically embedded in $\pi_M$, 
then one can extend a class in $\rEH_b^\bullet(\OP{F}_2 \times K)$ to a class in $\rEH_b^\bullet(\pi_M)$. 
That is why we require $\OP{F}_2 \times K$ to be hyperbolically embedded. 
Given an element $c\in\rEH_b^\bullet(\pi_M)$, we look at the restriction $\rEGb(c)_{|\OP{F}_2}$, 
where the group $\OP{F}_2$ is carefully embedded in $\C T_M$. 
The construction of the embedding is made in a way 
such that there is a~non-zero real number $\Lambda$ so that $\Lambda i^*(c)$ 
and $\rEGb(c)_{|\OP{F}_2}$ are close in the norm. 
It follows that the norm of $\rEGb(c)$ is positive whenever the norm of $i^*(c)$ is positive.

Let $i \colon~\OP{F}_2~\to~\pi_M$ be an embedding and let $a$ and $b$ generate $\OP{F}_2$. 
A~loop $\alpha$ in $M$ based at $z$ represents in a natural way an element of $\pi_M$
(as the $\OP{Z}(\pi_1(M))$-coset of the homotopy class of $\alpha$). 
If $\OP{dim}(M) = 2$ we assume that $i(a)$ and $i(b)$ 
are represented by simple loops based at $z$. In the next lemma we construct 
a~family of maps $\rho_\epsilon \colon \OP{F}_2 \to \C T_M$ such that the diagram

$$
\begin{tikzcd}
	\rEH^\bullet_b(\pi_M) \arrow[r, "\rEGb"] \arrow[d,"i^*"] &  
	\rEH^\bullet_{b}(\C T_{M}) \arrow[dl,"\rho_\epsilon^*"]\\
	\rEH^\bullet_b(\OP{F}_2) 
\end{tikzcd}
$$
is commutative up to scaling and a~small error controlled by $\epsilon$.

\begin{lemma}\label{l:rep}
Assume that $M$, $\C T_{M}$ and $i \colon \OP{F}_2 \to \pi_M$ are as above. 	
Then there exists a~family of homomorphisms $\rho_\epsilon \colon \OP{F}_2 \to \C T_{M}$, indexed by 
$\epsilon \in (0,1)$, satisfying the following property:
there exists a~non-zero real number $\Lambda$, such that for every class $c \in \rEH_b^\bullet(\pi_M)$ we have
$$\n\rho_\epsilon^*\rEGb (c)-\Lambda i^*(c)\n\xrightarrow{\epsilon \to 0} 0.$$
\end{lemma}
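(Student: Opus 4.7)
The plan is to construct $\rho_\epsilon$ geometrically so that on a fixed positive-measure region $D\subset M$ the cocycle $\gamma_s$ reproduces exactly the embedding $i\colon\OP{F}_2\to\pi_M$, while the ``error'' made outside $D$ shrinks to zero with $\epsilon$. First I would pick representative loops $\alpha,\beta$ in $M$ based at $z$ realizing $i(a),i(b)\in\pi_M$ (taking simple loops when $\dim M=2$), and a ``core'' region $D\subset M$ of fixed positive measure $\Lambda$, disjoint from $z$ and from the cut locus of $z$.

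The central step is to construct $\tilde a_\epsilon,\tilde b_\epsilon\in\C T_M$ as ``loop-holonomy'' maps along $\alpha$ and $\beta$ respectively. More precisely, $\tilde a_\epsilon$ is the time-$1$ map of a compactly supported isotopy $\{\phi^a_t\}_{t\in[0,1]}$ such that: (a)~$\phi^a_1|_D=\OP{id}_D$; (b)~for each $x\in D$, the loop obtained by concatenating the geodesic $z\to x$, the isotopy path $t\mapsto\phi^a_t(x)$, and the geodesic $x\to z$ represents $i(a)\in\pi_M$; (c)~the support of $\{\phi^a_t\}$ outside $D$ has measure tending to $0$ as $\epsilon\to 0$. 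An analogous construction yields $\tilde b_\epsilon$. The isotopies are built in the appropriate category: measure-preserving for $\OP{Homeo}_0(M,\mu)$, smooth volume-preserving for $\OP{Diff}_0(M,\OP{vol})$, and Hamiltonian for $\OP{Symp}_0(M,\omega)$. Since $\OP{F}_2$ is free, $\rho_\epsilon(a)=\tilde a_\epsilon$ and $\rho_\epsilon(b)=\tilde b_\epsilon$ extends uniquely to a homomorphism $\rho_\epsilon\colon\OP{F}_2\to\C T_M$.

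From (a), every element $\rho_\epsilon(w)$ is the identity on $D$; from (b), the cocycle identity, and induction on word length, $\gamma_s(\rho_\epsilon(w),x)=i(w)$ for every $w\in\OP{F}_2$ and every $x\in D$. Moreover, for $x$ outside the union of supports of the chosen isotopies, $\gamma_s(\rho_\epsilon(w),x)=1$ since $x$ is fixed throughout. Splitting the integral $\rho_\epsilon^\ast\rEGb(c)(w_0,\ldots,w_n)$ into three pieces---over $D$, over the transitional region (supports minus $D$), and over the complement of all supports---yields: $\Lambda\cdot i^\ast(c)(w_0,\ldots,w_n)$ from the first piece; an error bounded in sup norm uniformly in the tuple by $\n c\n_{sup}\cdot\mu\bigl(\OP{supp}(\tilde a_\epsilon)\cup\OP{supp}(\tilde b_\epsilon)\setminus D\bigr)$ from the second, which tends to $0$ by (c) (using that every $\rho_\epsilon(w)$ is the identity outside $\OP{supp}(\tilde a_\epsilon)\cup\OP{supp}(\tilde b_\epsilon)$, uniformly in $w$); and the constant cochain $c(1,\ldots,1)\cdot\mu\bigl(M\setminus\OP{supp}\bigr)$ from the third, which is killed by replacing $c$ with a normalized cohomologous representative. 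Hence $\n\rho_\epsilon^\ast\rEGb(c)-\Lambda i^\ast(c)\n_{sup}\to 0$, giving the desired convergence in the semi-norm on $\rEH_b^\bullet(\OP{F}_2)$.

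The main obstacle is the simultaneous realization of conditions (a)--(c) for both $\tilde a_\epsilon$ and $\tilde b_\epsilon$ with a common core $D$: the tubular neighborhoods of $\alpha$ and $\beta$ supporting the two isotopies must overlap in $D$, and both isotopies must reduce to the identity on $D$ at time $1$ while tracing out their respective loops along the way. One arranges this by taking $D$ in the common tubular overlap near $z$ and choosing the flows as variable-speed rotations in the normal bundle, performing exactly one full rotation on $D$ and vanishing smoothly on the tube boundary. The most delicate category is the symplectic one, where the isotopies are constrained to be Hamiltonian; here one exploits the identification $\mu=\omega^{\frac{1}{2}\dim M}$ to ensure the Hamiltonian isotopies are compatible with the measure-theoretic setup underlying $\gamma_s$.
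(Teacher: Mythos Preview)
Your conditions (a)--(c) are not all met by the construction you actually describe. The ``variable-speed rotation in the normal bundle, performing exactly one full rotation on $D$ and vanishing on the tube boundary'' is precisely the finger-push map, and its support is the \emph{entire} tube $N(\alpha)$, not merely $D$ together with a thin collar. Since you place $D$ in the overlap $N(\alpha)\cap N(\beta)$ near $z$, the set $\OP{supp}(\{\phi^a_t\})\setminus D$ contains $N(\alpha)\setminus N(\beta)$, whose measure is a fixed positive number independent of $\epsilon$; hence (c) fails and your sup-norm bound on the transitional piece does not tend to zero. Forcing (c) for a fixed $D$ would mean pushing a region of measure $\Lambda$ all the way around the loop $\alpha$ through a pipe of arbitrarily small measure by a \emph{measure-preserving} isotopy; whatever ``conveyor-belt'' trick one might attempt in $\OP{Homeo}_0(M,\mu)$, it is far from the construction you give and is especially problematic in the symplectic category.

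The paper's proof meets this obstruction with a finer decomposition rather than by shrinking supports. The tubes $N(\alpha),N(\beta)$ have fixed width; only the boundary annuli $B_\epsilon$ (where the rotation is fractional) shrink. The crucial extra regions are $A^a_\epsilon=A_\epsilon(\alpha)\setminus N(\beta)$ and $A^b_\epsilon=A_\epsilon(\beta)\setminus N(\alpha)$, both of \emph{fixed positive measure}. For $x\in A^a_\epsilon$ only the $a$-push acts, so $\gamma(\rho_\epsilon(w),x)$ is a conjugate of $h_a(w)$, where $h_a\colon\OP{F}_2\to\langle a\rangle$ is the retraction $b\mapsto e$. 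The integral over $A^a_\epsilon$ therefore represents $h_a^*$ of a class in $\rEH_b^\bullet(\langle a\rangle)=\rEH_b^\bullet(\mathbb{Z})=0$, so it vanishes \emph{in reduced cohomology} even though it is not small as a cochain. Combined with the fact that conjugation acts trivially on cohomology (so the conjugates $u_xwu_x^{-1}$ arising on the overlap $A_\epsilon$ still give $\mu(A_\epsilon)\,i^*[c]$), one gets $\rho_\epsilon^*\rEGb[c]=\mu(A_\epsilon)\,i^*[c]+[c^\epsilon_{res}]$ with $\n[c^\epsilon_{res}]\n\leq\mu(B_\epsilon)\n c\n_{sup}\to0$ and $\Lambda=\mu(N(\alpha)\cap N(\beta))$. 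The estimate is unavoidably at the level of reduced cohomology classes, not sup norms of cochains; this cohomological vanishing for the cyclic-subgroup pieces is the idea missing from your three-region split.
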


\begin{proof}
Let $\OP{dim}(M)=m$. Denote by $B^{m-1} \subset \mathbb{R}^{m-1}$ the $m-1$ dimensional closed unit ball, 
and let $S^1 = \mathbb{R}/\mathbb{Z}$. 
We fix $\epsilon \in (0,1)$ and define an isotopy $P^t_\epsilon \in \OP{Diff}(S^1 \times B^{m-1})$ by  
$$P^t_\epsilon(\psi,x)=(\psi+tf(\n x\n),x),$$ 
where $t \in [0,1]$, and $f:[0,1]\to \mathbb{R}$ is a~smooth function such that 
$f(y) = 1$ for $y \leq 1-\epsilon$ and $f(1) = 0$.
We call $P^t_\epsilon$ the finger-pushing isotopy and $P^1_\epsilon$ the finger-pushing map. 
Note that $P^0_\epsilon = Id$ and that $P^1_\epsilon$ fix point-wise the boundary of $S^1 \times B^{m-1}$
and fix all points $(\psi,x)$ for which $\n x \n \leq 1-\epsilon$. 

Let $g_0$ be the product of the standard euclidean Riemannian metrics on $B^{m-1}$ and $S^1$.
By the theorem of Fubini, the measure induced by $g_0$~is preserved 
by the map $P^t_\epsilon$ for every $t \in [0,1]$ and every $\epsilon~\in~(0,1)$.
In case $m=2k$, we similarly construct the finger-pushing 
isotopy $P^t_\epsilon \in \OP{Diff}(S^1\times B^1\times B^{2k-2})$, $t \in [0,1]$
which preserves the standard symplectic form 
$dx\wedge dy+\sum_{i=1}^{k-1}dp_i\wedge dq_i$ on $S^1\times B^1\times B^{2k-2}$. 
The precise construction is presented in \cite[proof of Theorem 1.3]{BK-frag}.  

Recall that $a,b$ are generators of $\OP{F}_2$. 
We represent $i(a)$ and $i(b)$ by embedded loops $\alpha$ 
and $\beta$ in $M$ which are based at $z$ and intersect only at $z$. 
Note that if $m=2$ this is our assumption, and
if $m>2$ then any two elements of $\pi_M$ may be represented in this way.
Let $N(\alpha)$~be a~closed tubular neighborhood of $\alpha$ and let $P^t_{\epsilon}(\alpha)$~be the isotopy
defined by pulling-back $P^t_{\epsilon}$ via $n_\alpha \colon N(\alpha) \to S^1 \times B^{m-1}$ and extending it
by the identity outside $N(\alpha)$. 
If $\C T_M = \OP{Homeo}_0(M,\mu)$, or $\OP{Diff}_0(M,\OP{vol})$, then the Moser trick allows us to choose $n_\alpha$  
such that $P^t_{\epsilon}(\alpha)$ preserves the volume form, and
if $\C T_M = \OP{Symp}_0(M,\omega)$, then the Moser trick allows us to choose $n_\alpha$  
such that $P^t_{\epsilon}(\alpha)$ preserves the symplectic form $\omega$.
Let 
$$
A_{\epsilon}(\alpha) = n_\alpha^{-1}(\{(\psi,x)~|~\n x \n\leq 1-\epsilon\}),\qquad
B_{\epsilon}(\alpha) = N(\alpha)-A_{\epsilon}(\alpha).
$$
Note that $P^1_\epsilon(\alpha)$ fixes point-wise $A_{\epsilon}(\alpha)$.
In the same way we define $P^t_{\epsilon}(\beta)$, $A_{\epsilon}(\beta)$ and $B_{\epsilon}(\beta)$.
The homomorphism $\rho_{\epsilon} \colon \OP{F}_2 \to \C T_M$  is given~by: 
$$\rho_{\epsilon}(a) = P^1_{\epsilon}(\alpha),\qquad
\rho_{\epsilon}(b) = P^1_{\epsilon}(\beta).$$
Now we show that there exists a non-zero real number $\Lambda$ such that for every $[c]\in\rEH_b^\bullet(\pi_M)$
we have
$$\n\rho_\epsilon^*\rEGb ([c])-\Lambda i^*([c])\n\xrightarrow{\epsilon\to 0} 0.$$ 
To simplify the notation, we identify $\OP{F}_2$ with its image $i(\OP{F}_2)$.
First we consider the values of $\gamma$ on elements of the form $\rho_{\epsilon}(w)$, where $w \in \OP{F}_2$.
Let $h_a \colon \OP{F}_2 \to \B \langle a \rangle$ be the 
retraction onto the subgroup generated by $a$ that sends $b$ to the trivial element.
Similarly, we define $h_b \colon \OP{F}_2 \to \B \langle b \rangle$.
\begin{center}
\begin{tikzpicture}
	\draw[pattern=north west lines, pattern color=gray!50!white, rounded corners, very thick, ] (-4,-2) rectangle (1,2);
	\fill[white, rounded corners] (-3,-1) rectangle (0,1);
	\draw[rounded corners, very thick] (-3,-1) rectangle (0,1);
	\draw[pattern=north east lines, pattern color=gray!50!white, rounded corners, very thick] (0,-2) rectangle (5,2);
	\fill[white, rounded corners] (1,-1) rectangle (4,1);
	\draw[rounded corners, very thick] (1,-1) rectangle (4,1);

	\draw (0.5,0) node {$\bullet$};
	\draw (0.5,-0.2) node {$z$};
	\draw (0.5,-1.5) node {$A_\epsilon$};
	\draw (-3.4,-1.5) node {$A^a_\epsilon$};
	\draw (4.4,-1.5) node {$A^b_\epsilon$};
	\draw (-1.5,0) node {$\alpha$};
	\draw (2.5,0) node {$\beta$};
	\draw [->] (-1.10,-0.25) arc [radius=0.5, start angle=-30, end angle=240];
	\draw [->] (2.90,-0.25) arc [radius=0.5, start angle=-30, end angle=240];
	\draw (-4.5,1.8) node {$B_\epsilon$};
	\draw (5.55,1.8) node {$B_\epsilon$};
	\draw [->] (-4.5,2) arc [radius=0.3, start angle=150, end angle= 30];
	\draw [<-] (5,2) arc [radius=0.3, start angle=150, end angle= 30];
\end{tikzpicture}
\end{center}

From the description of $\gamma$ in Subsection \ref{s:example}, 
we see that if $x$ belongs to the set $A_\epsilon\defeq A_{\epsilon}(\alpha)\cap A_{\epsilon}(\beta)$, 
then $\gamma(\rho_{\epsilon}(w),x)$ is conjugated to $w$.
Similarly if $x\in A^a_\epsilon \defeq A_{\epsilon}(\alpha) - N(\beta)$, 
then $\gamma(\rho_{\epsilon}(w),x)$ is conjugated to $h_a(w)$ and 
if  $x\in A^b_\epsilon\defeq A_{\epsilon}(\beta) - N(\alpha)$, 
then $\gamma(\rho_{\epsilon}(w),x)$ is conjugated to $h_b(w)$. 
If $x\in B_\epsilon \defeq B_{\epsilon}(\alpha)\cup B_{\epsilon}(\beta)$, 
then we do not have any control over the loops we get,
but this case is negligible if $\epsilon$ is small enough. 
To sum up, we have:
$$
\gamma(\rho_{\epsilon}(w),x) = 
\begin{cases}
	e & x \in M-(N(\alpha) \cup N(\beta)),\\
	u_xwu_x^{-1} & x \in A_{\epsilon} = A_{\epsilon}(\alpha) \cap A_{\epsilon}(\beta),\\
	u_{a,x}h_a(w)u_{a,x}^{-1} & x \in A^a_\epsilon = A_{\epsilon}(\alpha) - N(\beta),\\
	u_{b,x}h_b(w)u_{b,x}^{-1} & x \in A^b_\epsilon = A_{\epsilon}(\beta) - N(\alpha),\\
	? & x \in B_\epsilon = B_{\epsilon}(\alpha) \cup B_{\epsilon}(\beta),
\end{cases}
$$
for some $u_x, u_{a,x}, u_{b,x} \in \pi_M$.
Let $n\in\mathbb N$ and $[c] \in \rEH_b^n(\pi_M)$.
Without loss of generality, we assume that $c(e,\ldots,e)~=~0$. 
Let $\overbar{g}~=~(g_0,g_1,\ldots)\in~\C T_{M}^n$. Denote 
$$\gamma(\overbar{g},x)=(\gamma(g_0,x),\gamma(g_1,x),\ldots).$$ 
Let $\overbar{w} \in \OP{F}_2^n$. We have: 
$$\rho_{\epsilon}^*\rEGb(c)(\overbar{w})=\rEGb(c)(\rho_\epsilon(\overbar{w}))=\int_{M} c(\gamma(\rho_{\epsilon}(\overbar{w}),x))d\mu(x).$$
Denote $u.c(\overbar{w}):=c(u\overbar{w}u^{-1})$, where $u\in\pi_M$.
Thus we obtain
\begin{align*}\label{e:split} 
	\rho_{\epsilon}^* \rEGb (c)(\overbar{w}) =& \int_{A_\epsilon}u_x.c(\overbar{w})d\mu(x) +
	\int_{A^a_\epsilon}u_{a,x}.c(h_a(\overbar{w}))d\mu(x)+\\
	&+ \int_{A^b_\epsilon}u_{b,x}.c(h_b(\overbar{w}))d\mu(x)+
	\int_{B_{\epsilon}}c(\gamma(\rho_{\epsilon}(\overbar{w}),x))d\mu(x).
\end{align*}

Recall that conjugation acts trivially on the cohomology, which gives us $[u.c]=[c]$. 
Both $\OP{Z}_b^n(G) = \OP{ker}(d_n)$ 
and $\rH_b^n(G)$ are Banach spaces and $[\cdot] \colon \OP{Z}_b^n(G) \to \rH_b^n(G)$ is a~continuous linear map.
Hence 
\begin{align*}
\left[\int_{A_\epsilon}u_x.c(\overbar{w})d\mu(x)\right]&=
\left[\sum_{u \in \pi_M}\mu(\{x \in A_\epsilon~|~u_x=u\})u.c(\overbar{w})\right]\\
&=\hspace{3px}\sum_{u \in \pi_M}\mu(\{x \in A_\epsilon~|~u_x=u\})i^*[u.c]=\mu(A_\epsilon)i^*([c]).
\end{align*}
Let $u.c_{|a}$ be the restriction of $u.c$ to the subgroup generated by the generator $a$.
The function $\overbar{w} \to c(uh_a(\overbar{w})u^{-1})$ equals to the pull-back of the cocycle $u.c_{|a}$, namely: 
$$c(uh_a(\overbar{w})u^{-1}) = h_a^*(u.c_{|a})(\overbar{w}).$$ 
Moreover, since $\rEH^n_b(\mathbb{Z})$ is trivial, the cocycle 
$\overbar{w} \to c(uh_a(\overbar{w})u^{-1})$ defines the trivial class in $\rEH^n_b(\OP{F}_2)$. 
It follows that 
$$
\left[\int_{A^a_\epsilon}u_{a,x}.c(h_a(\overbar{w}))d\mu(x)\right] = 
\sum_{u \in \pi_M}\mu(\{x \in A_\epsilon~|~u_{a,x}=u\})h_a^*([u.c_{|a}]) = 0.
$$
The same holds for the integral over $A^b_\epsilon$. Let 
$$c^{\epsilon}_{res}(\overbar{w}) = \int_{B_{\epsilon}}c(\gamma(\rho_{\epsilon}(\overbar{w}),x))d\mu(x).$$
Note that $c^{\epsilon}_{res}$ is a~cocycle on $\OP{F}_2$. 
Now we can write:
$$
\rho^*_{\epsilon} \rEGb ([c]) = \mu(A_{\epsilon})i^*([c]) + [c^{\epsilon}_{res}],
$$
and  
$$\n [c^{\epsilon}_{res}]\n \leq \mu(B_{\epsilon})\n c\n_{sup}.$$
Moreover, $\mu(A_{\epsilon}) \xrightarrow{\epsilon \to 0} \mu(N(\alpha) \cap N(\beta))\neq 0$ and
$\mu(B_{\epsilon})~\xrightarrow{\epsilon \to 0}~0$. It follows that:
\begin{align*}
&\n \rho^*_{\epsilon} \rEGb ([c])-\mu(N(\alpha)\cap N(\beta))i^*([c])\n \leq\\
&\leq [\mu(A_{\epsilon})-\mu(N(\alpha)\cap N(\beta))]\n i^*([c])\n +\mu(B_{\epsilon})\n c\n_{sup} . 
\end{align*}
Hence $\n\rho^*_{\epsilon}\rEGb ([c])-\mu(N(\alpha)\cap N(\beta))i^*([c])\n\xrightarrow{\epsilon \to 0} 0.$ 
\end{proof}

\begin{remark}
In what follows we apply Lemma \ref{l:rep} for injective $i$. 
However, Lemma \ref{l:rep} holds for every $i$.
Injectivity was used only to simplify the notation, 
when we identified $\OP{F}_2$ with its image $i(\OP{F}_2)$.
\end{remark}


\begin{theorem}\label{t:surjects.on.F.strong}
Let $h\colon\pi_1(M)\to\OP{F}_2$ be a surjective homomorphism. 
We have 
$$\rEH_{b}^{\bullet}(\OP{F}_2) \hookrightarrow  \rEH_{b}^{\bullet}(\C T_{M}).$$
\end{theorem}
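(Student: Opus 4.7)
The plan is to use Lemma \ref{l:rep} together with a section of $p$ to manufacture an injection directly from $\rEH_b^\bullet(\OP{F}_2)$ into $\rEH_b^\bullet(\C T_M)$.

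First, I would lift the surjection $p$ to a splitting. Since $\OP{F}_2$ is free, I can pick any preimages of the generators $a,b$ under $p$ and extend to a homomorphism $i\colon \OP{F}_2 \to \pi_M$ satisfying $p\circ i=\OP{id}_{\OP{F}_2}$. In particular $i$ is injective, so Lemma \ref{l:rep} applies to it. Passing to bounded cohomology, $i^* \circ p^* = \OP{id}$ on $\rEH_b^\bullet(\OP{F}_2)$; consequently $p^*\colon \rEH_b^\bullet(\OP{F}_2) \to \rEH_b^\bullet(\pi_M)$ is an isometric (in particular injective) linear map with $i^*$ as a left inverse.

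Next I would take the claimed homomorphism to be
$$
\Phi \defeq \rEGb \circ p^* \colon \rEH_b^\bullet(\OP{F}_2) \to \rEH_b^\bullet(\C T_M),
$$
and verify injectivity. Fix a nonzero class $\alpha \in \rEH_b^\bullet(\OP{F}_2)$, set $c = p^*(\alpha) \in \rEH_b^\bullet(\pi_M)$, and observe that $i^*(c) = \alpha \neq 0$. By Lemma \ref{l:rep}, applied to the embedding $i$, there exist representations $\rho_\epsilon\colon \OP{F}_2 \to \C T_M$ and a nonzero constant $\Lambda$ such that
$$
\n \rho_\epsilon^*\rEGb(c) - \Lambda\, i^*(c)\n \xrightarrow{\epsilon \to 0} 0.
$$
Since $\Lambda\, i^*(c) = \Lambda\alpha$ has strictly positive norm, for $\epsilon$ sufficiently small $\rho_\epsilon^* \rEGb(c) \neq 0$ in $\rEH_b^\bullet(\OP{F}_2)$; hence $\rEGb(c) = \Phi(\alpha) \neq 0$ in $\rEH_b^\bullet(\C T_M)$.

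The main conceptual point, and the only real obstacle, is recognizing that the statement of Lemma \ref{l:rep} is tailor-made for an $i$ that splits a surjection: it compares $\rho_\epsilon^* \rEGb(c)$ with $i^*(c)$ rather than directly with classes on $\pi_M$, so one must arrange the input $c$ so that $i^*(c)$ is exactly the class one wants to detect. The section produced by freeness of $\OP{F}_2$ makes this automatic. Everything else is routine: $p^*$ is injective, $\rEGb$ is a linear map of Banach spaces, and the norm estimate from Lemma \ref{l:rep} transfers nonvanishing through the pullback.
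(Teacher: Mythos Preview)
Your approach is the same as the paper's, and your argument is correct when $\dim M \geq 3$. However, there is a gap in the surface case $\dim M = 2$. Lemma~\ref{l:rep} carries an extra hypothesis in that case: it requires that $i(a)$ and $i(b)$ can be represented by \emph{simple} (embedded) loops based at $z$, intersecting only at $z$. This is what makes the tubular-neighborhood construction of the finger-pushing maps $P^1_\epsilon(\alpha)$ and $P^1_\epsilon(\beta)$ work. An arbitrary section of a given surjection $p\colon \pi_M \to \OP{F}_2$ will not in general send $a$ and $b$ to classes representable by simple loops on a surface, so you are not entitled to invoke Lemma~\ref{l:rep} as stated.

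The paper fixes this by, in the $2$-dimensional case, discarding the given $p$ and instead choosing two embedded based loops that meet only at $z$; these generate a free subgroup $\OP{F}_2 < \pi_M$ for which there is a retraction $\pi_M \to \OP{F}_2$. One then runs your argument with this new retraction in place of $p$. Since the theorem only asserts the existence of \emph{some} injection $\rEH_b^\bullet(\OP{F}_2) \hookrightarrow \rEH_b^\bullet(\C T_M)$, replacing $p$ is harmless. Adding this one sentence to your argument closes the gap.
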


\begin{proof}
First note that since $h$ is onto, the center of $\pi_1(M)$ is 
mapped into the center of $\OP{F}_2$, hence $h$ is trivial on $\OP{Z(\pi_1(M))}$.
It means, that $h$ induces a surjective homomorphism $p \colon \pi_M \to \OP{F}_2$. 

Recall that $\OP{dim}(M)=m$.
If $m>3$, then we take $i\colon\OP{F}_2\to\pi_M$ to be any section of $p$.  
If $m=2$, then it is easy to find two embedded loops based at $z$ and intersecting only at $z$, such that
they generate $\OP{F}_2$ and there is a~retraction $\pi_1(M)\to\OP{F}_2$.
If this is the case, we substitute $p$ by this retraction (note that in this case $\pi_1(M) = \pi_M$).

Let $i$ be a~section of this new $p$ and let $p^* \colon \rEH^\bullet_b(\OP{F}_2) \to \rEH^\bullet_b(\pi_M)$. We show that $\rEGb\circ p^*$ is an embedding. 
The section $i$ satisfies the assumptions of Lemma \ref{l:rep}.
Let $\{\rho_\epsilon\}$ be the family of homomorphisms from Lemma \ref{l:rep}.
We have
$$
\begin{tikzcd}
	\rEH^\bullet_b(\pi_M) \arrow[r, "\rEGb"] \arrow[d,shift left=.75ex, "i^*"]&  
	\rEH^\bullet_{b}(\C T_{M}) \arrow[ld,"\rho_\epsilon^*"]\\
	\rEH^\bullet_b(\OP{F}_2) \arrow[u,shift left=.75ex,"p^*"] 
\end{tikzcd}
$$
Note that $i^*\circ p^*=id$.
Suppose that $d~\in~\rEH^\bullet_b(\OP{F}_2)$ is a~non-trivial class. 
In the reduced cohomology it means that $\n d\n>~0$.
Let $c=p^*(d)$. We have $\n i^*(c)\n=\n d\n>0$. 
Since $\n\rho_\epsilon^*\rEGb(c)-\Lambda i^*(c)\n\xrightarrow{\epsilon\to 0} 0$,
then for some small $\epsilon$ we have $\n\rho_\epsilon^*\rEGb(c)\n>0$.
It follows that 
$$\rEGb (c)=\rEGb(p^*(d))\neq 0.$$
Thus $\rEGb\circ p^*$ is an embedding. 
\end{proof}

\begin{remark}\label{r:int.volume}
One can define a non-trivial class in $\rEH_b^3(\OP{F}_2)$ by choosing an isometric action $\rho$ 
of $\OP{F_2}$ on the $3$-dimensional hyperbolic space $\B H^3$
and defining a cocycle $\OP{vol}_{\rho}(a_1,\ldots,a_4)$ to be the 
signed volume of the geodesic simplex $\Delta(\rho(a_1)x,\ldots,\rho(a_4)x)$, 
where $x \in \B H^3$. For some $\rho$, the class 
defined by $\OP{vol}_{\rho}$ has positive norm, see \cite{soma}.
The classes in $\rEH_b^3(\C T_M)$ which are constructed in 
Theorem \ref{t:surjects.on.F.strong} have similar geometrical interpretation. 
More precisely, the value of $\rEGb(p^*(\OP{vol}_{\rho}))(f_1,\ldots,f_4)$ is the average value of the
signed volumes of $\Delta(p\gamma(f_1,x),\ldots,p\gamma(f_4,x))$ over $M$. 
Since every $\gamma(f_i,x)$ takes essentially 
finitely many values, this average is a finite sum
of weighted signed volumes of certain simplices in $\B H^3$. 
\end{remark}

\begin{theorem}
Let $K$ be a finite group and $j \colon \OP{F}_2 \times K \to \pi_M$ be a~hyperbolic embedding.
Then
$$
\OP{dim} \rEH_{b}^\bullet(\C T_{M}) \geq \OP{dim} \rEH_{b}^\bullet(\OP{F}_2).
$$
\end{theorem}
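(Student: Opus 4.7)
The plan is to adapt the argument of Theorem~\ref{t:surjects.on.F.strong}: in place of the section $i$ of the surjection $p$ used there, I would produce, from the hyperbolic embedding $j$, an inclusion $i\colon\OP{F}_2\hookrightarrow\pi_M$ together with a linear section $\sigma\colon\rEH_b^\bullet(\OP{F}_2)\to\rEH_b^\bullet(\pi_M)$ of the restriction $i^*$. Lemma~\ref{l:rep} will then close the argument as before.

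First I would take $i$ to be the composition $\OP{F}_2\hookrightarrow\OP{F}_2\times K\xrightarrow{j}\pi_M$. Because $K$ is finite, hence amenable, Gromov's mapping theorem applied to the projection $\OP{F}_2\times K\to\OP{F}_2$ (with amenable kernel $K$) gives an isometric isomorphism $\rEH_b^\bullet(\OP{F}_2\times K)\cong\rEH_b^\bullet(\OP{F}_2)$ compatible with restriction along $i$. Composing the inverse of this isomorphism with the extension map furnished by the Frigerio--Pozzetti--Sisto theorem~\cite{fps}---which, precisely because $\OP{F}_2\times K$ is hyperbolically embedded in $\pi_M$, provides a section of the restriction $\rEH_b^\bullet(\pi_M)\to\rEH_b^\bullet(\OP{F}_2\times K)$---yields the desired $\sigma$, with $i^*\circ\sigma=\OP{id}$.

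Next I would apply Lemma~\ref{l:rep} to this $i$. When $\OP{dim}(M)\ge 3$ the simple-loop hypothesis of the lemma is automatic; when $\OP{dim}(M)=2$, $\pi_1(M)$ is torsion-free so $K$ must be trivial, and a standard general-position argument represents $i(a)$ and $i(b)$ by embedded loops based at $z$ meeting only at $z$. The lemma then supplies a family $\rho_\epsilon\colon\OP{F}_2\to\C T_M$ and a nonzero constant $\Lambda$ with
$$\n\rho_\epsilon^*\rEGb(c)-\Lambda\,i^*(c)\n\xrightarrow{\epsilon\to 0}0\qquad\text{for every }c\in\rEH_b^\bullet(\pi_M).$$

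To conclude I would show that $\Phi\defeq\rEGb\circ\sigma\colon\rEH_b^\bullet(\OP{F}_2)\to\rEH_b^\bullet(\C T_M)$ is injective, which immediately yields the dimension bound. Given linearly independent $d_1,\ldots,d_n\in\rEH_b^\bullet(\OP{F}_2)$, set $c_i\defeq\sigma(d_i)$, so $i^*(c_i)=d_i$. Applying Lemma~\ref{l:rep} to each $c_i$, for every $\eta>0$ there is an $\epsilon>0$ with $\n\rho_\epsilon^*\Phi(d_i)-\Lambda d_i\n<\eta$ simultaneously for $i=1,\ldots,n$. Since $\Lambda\ne 0$ the vectors $\Lambda d_1,\ldots,\Lambda d_n$ are linearly independent, and linear independence is an open condition in any finite-dimensional normed space, so for $\eta$ small enough the vectors $\rho_\epsilon^*\Phi(d_i)$ remain linearly independent; a fortiori so do $\Phi(d_1),\ldots,\Phi(d_n)$. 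As this holds for every finite linearly independent family, $\Phi$ is injective, giving $\OP{dim}\rEH_b^\bullet(\C T_M)\ge\OP{dim}\rEH_b^\bullet(\OP{F}_2)$. The main obstacle I anticipate is verifying that the Frigerio--Pozzetti--Sisto extension really respects the reduced exact bounded cohomology and produces a genuine section $\sigma$ there; the $\OP{dim}(M)=2$ simple-loops subtlety is a secondary technical point.
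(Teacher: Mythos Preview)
Your overall strategy matches the paper's: feed the inclusion $i\colon\OP{F}_2\hookrightarrow\pi_M$ into Lemma~\ref{l:rep}, and use the Frigerio--Pozzetti--Sisto result \cite{fps} to guarantee that $i^*\colon\rEH_b^\bullet(\pi_M)\to\rEH_b^\bullet(\OP{F}_2)$ is surjective. The paper's endgame is phrased as a kernel containment $\OP{ker}(\rEGb)\subset\OP{ker}(i^*)$ followed by a dimension count on quotients; your ``linear independence is open'' argument on finite families is an equivalent repackaging. Note, incidentally, that your argument only uses surjectivity of $i^*$---you never need $\sigma$ to be linear, since you only compare $\rho_\epsilon^*\rEGb(c_i)$ with $\Lambda d_i$ for finitely many chosen preimages $c_i$---so your main stated worry about \cite{fps} providing a genuine linear section on $\rEH_b^\bullet$ is not actually an obstacle.

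The real gap is your treatment of $\OP{dim}(M)=2$, which you flag as a ``secondary technical point'' but which is in fact broken. In dimension $2$, general position yields loops in transverse position, not embedded loops; and an arbitrary element of a surface group is \emph{not} in general representable by a simple closed curve (think of $a_1^2$ in a genus-$2$ surface group). Hence Lemma~\ref{l:rep} cannot be applied to an arbitrary embedding $i\colon\OP{F}_2\hookrightarrow\pi_M$ when $M$ is a surface. The paper handles this case entirely differently: it observes that whenever $\OP{F}_2$ embeds into $\pi_M$ for a surface $M$, one can in fact find a retraction $\pi_M\to\OP{F}_2$, and then simply invokes Theorem~\ref{t:surjects.on.F.strong}. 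You should do the same; without this reduction the surface case is not covered.
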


\begin{proof}
Let $\OP{dim}(M)=2$. In this case if $\OP{F}_2$ embeds in $\pi_M$, then one can find a~retraction
$\pi_M \to \OP{F}_2$. Thus if $\OP{dim}(M)=2$, the statement follows from Theorem \ref{t:surjects.on.F.strong}. 

Now suppose that $\OP{dim}(M)>2$, and let $i\colon\OP{F_2}\to\pi_M$ be the homomorphism $j$ restricted to $\OP{F}_2\times\{e\}$.
Since $\OP{dim}(M)>2$, $i(a)$ and $i(b)$ 
may be represented by based loops whose intersection is the base-point $z$. 
Let $\{\rho_\epsilon\}_\epsilon$ be the family of maps 
$\rho_\epsilon\colon\OP{F}_2\to\C T_{M}$ constructed in Lemma \ref{l:rep}.
We have
$$
\begin{tikzcd}
	\rEH^\bullet_b(\pi_M) \arrow[r, "\rEGb"] \arrow[d,"i^*"] &
	\rEH^\bullet_{b}(\C T_{M}) \arrow[ld,"\rho_\epsilon^*"]\\
	\rEH^\bullet(\OP{F}_2)
\end{tikzcd}
$$
Let us show that $\OP{ker}(\rEGb)\hspace{-2.5px}\subset\hspace{-1.5px}\OP{ker}(i^*)$. 
There is a~non-zero real number $\Lambda$ such that for every $c\in\rEH_b^\bullet(\pi_M)$
we have 
$$\n \rho_\epsilon^*\rEGb (c)-\Lambda i^*(c)\n\xrightarrow{\epsilon \to 0} 0.$$
Let $c\in\rEH_b^\bullet(\pi_M)$ be such that $\rEGb(c)=0$. Then 
$$\n\Lambda i^*(c)\n=\n \rho_\epsilon^* \rEGb (c)-\Lambda i^*(c)\n\xrightarrow{\epsilon \to 0} 0.$$ 
Hence $\n i^*(c)\n=0$ and $c\in\OP{ker}(i^*)$. 
Thus $\OP{ker}(\rEGb)\hspace{-2.5px}\subset\hspace{-1.5px}\OP{ker}(i^*)$ and
\begin{align*}
	\OP{dim} \rEH_{b}^\bullet(\C T_{M}) &\geq \OP{dim}(\rEH_{b}^\bullet(\pi_M)/\OP{ker}(\rEGb))\\
	&\geq  \OP{dim} (\rEH_{b}^\bullet(\pi_M)/\OP{ker}(i^*)).
\end{align*}
Recall that $j\colon\OP{F}_2\times K\to\pi_M$ 
is a~hyperbolic embedding. It follows from \cite{fps} that the map
$j^*\colon\rEH_b^\bullet(\pi_M)\to\rEH_b^\bullet(\OP{F}_2 \times K)$ is surjective.
Using the identification $\rEH_b^\bullet(\OP{F}_2 \times K)=\rEH_b^\bullet(\OP{F}_2)$
we can write that $i^*=j^*$. Thus $i^*$ is surjective and 
$\rEH_{b}^\bullet(\pi_M)/\OP{ker}(i^*)=\rEH_{b}^\bullet(\OP{F}_2)$.
\end{proof}


\section {Questions and final remarks}
\begin{remark}\label{r:general} 
Versions of Theorem \ref{t:surjects.on.F} and Theorem \ref{t:hyp.embed}, where $\rEH_b^\bullet$
is substituted by $\rH_b^\bullet$ hold in a more general setting. 
Namely, they hold for a~topological manifold $M$ equipped with  
a regular finite Borel measure $\mu$ which is positive 
on open sets and zero on nowhere dense sets, 
and $\C T_M$~is the identity component of the group of  
measure-preserving homeomorphisms of $M$. 
One can also take $\C T_M$ to be the identity component of the group of volume-preserving diffeomorphisms,
or symplectomorphisms of $M$.\end{remark} 

\begin{remark}\label{r:extend}
The map $\EGb \colon \OP{EH}_b^\bullet(\pi_M)\to\OP{EH}_b^\bullet(\OP{Diff}_0(M,\mu))$ 
does not factor through $\OP{Diff}_0(M)$ (note that from this it follows that the map $\EGb \colon \OP{EH}_b^\bullet(\pi_M)\to\OP{EH}_b^\bullet(\OP{Homeo}_0(M,\mu))$ 
as well does not factor through $\OP{Homeo}_0(M)$). 
Indeed, if $\EGb$ would factor through $\OP{Diff}_0(M)$, one could construct a~non-trivial homogeneous quasimorphisms on $\OP{Diff}_0(M)$, 
which leads to a contradiction, since for many $M$ the group $\OP{Diff}_0(M)$ does not admit such quasimorphisms. 
More precisely, let $M$ be a~closed connected hyperbolic $3$-manifold.
Recall that $\OP{EH}_b^2(G)$ is the space of homogeneous quasimorphisms on $G$.
Since $\pi_M$ is non-elementary hyperbolic, we have $\OP{EH}_b^2(\pi_M)\neq 0$ and it follows from \cite[Theorem 1.11]{MR2509711} that $\OP{EH}_b^2(\OP{Diff}_0(M)) = 0$.
It is easy to see that 
$$\EGb \colon \OP{EH}_b^2(\pi_M)\to\OP{EH}_b^2(\OP{Diff}_0(M,\mu))$$
is an embedding (in the case when $M$ is a~surface, see \cite[Theorem 2.5]{eq} for the proof), 
and hence cannot factor through the trivial group.
\end{remark}

\begin{remark}
In this paper we assume that homeomorphisms are isotopic to the identity.
This assumption can be dropped 
if we substitute the group $\pi_1(M)$ by the mapping class group $\OP{MCG}(M,z)$ 
(such approach was used in \cite{eq} for surfaces). 
Indeed, let 
$$\delta^{ext} \colon \OP{Homeo}(M,z) \to \OP{MCG}(M,z)$$
be the quotient map, where 
$\OP{MCG}(M,z) = \pi_0(\OP{Homeo}(M,z))$ and let $\OP{Homeo}(M,z)$ 
be the group of homeomorphisms of $M$ fixing $z$. 
Consider the cocycle 
$$\gamma^{ext} \colon \OP{Homeo}(M,\mu) \times M \to \OP{MCG}(M,z)$$
given by $\gamma^{ext}(g,x) = \delta^{ext}(s_{g(x)}^{-1} \circ g \circ s_x)$.
This cocycle induces the map
$$\Gb^{ext} = \OP{Ind}_b(\gamma^{ext}) \colon \OP{H}_b^\bullet(\OP{MCG}(M,z)) \to \OP{H}_b^\bullet(\OP{Homeo}(M,\mu)).$$
The disadvantage of this approach is that almost nothing is known 
about $\OP{H}_b^\bullet(\OP{MCG}(M,z))$ when the dimension of $M$ is greater than $2$.  
\end{remark}


We finish this section with a question. Let $M^n$~be a~compact Riemannian manifold with negative sectional 
curvature, and let $\pi_1(M)$ act by deck-transformations on the universal cover $\wt{M}$. 
It is known that there is a~common bound for volumes of geodesic simplices in $\wt{M}$,
thus one can define a~non-trivial class $[\OP{vol}_M]\in\OP{H}_b^n(\pi_1(M))$ in a similar way as in Remark \ref{r:int.volume}. 

\begin{question}
Is the class $\Gb([\OP{vol}_M])\in \OP{H}_b^n(\OP{Homeo}_0(M,\mu))$ non-trivial?
\end{question}

\bibliography{bibliography}
\bibliographystyle{plain}
 
\end{document}